\newtheorem{theorem}{Theorem}[section] 
\newtheorem{corollary}[theorem]{Corollary}
\newtheorem{proposition}[theorem]{Proposition}
\theoremstyle{definition}
\newtheorem{example}[theorem]{Example}
  \newcommand{\norm}{\left\Vert\,\cdot\,\right\Vert}
\newcommand {\C}{\mathbb C}
\newcommand {\N}{\mathbb N}
\newcommand {\I}{\mathbb I}
\newcommand {\s}{\smallskip}
\newcommand {\m}{\medskip}
\newcommand{\lv}{\left\vert}
\newcommand{\rv}{\right\vert}
\newcommand{\lV}{\left\Vert}
\newcommand{\rV}{\right\Vert}
\newcommand{\dd}{\,{\rm d}}
\newcommand{\CBA}{commutative Banach algebra}
\begin{document}


\title{Maximal ideals in commutative Banach algebras} 
\author{H.\ G.\ Dales}

\maketitle

\begin{abstract}
 {We show that each maximal ideal in a commutative Banach algebra has codimension $1$.}
\end{abstract}

\section{}
\noindent Let $A$ be a commutative algebra (over the complex field, $\C$). An ideal $M$ in $A$ is {\it maximal\/} if $M \neq A$ and if $M=I$ or $I=A$ 
for each ideal $I$ in $A$ with $M\subset I$.\s

We wonder  if every maximal ideal in a {\CBA}  is of codimension $1$ in $A$?\s

This is not resolved in \cite{D}; I cannot see a comment on the point  in other texts.  Here we show that the above is indeed the case.

Let $A$ be an algebra.  Then $A^{[2]} = \{ab: a,b\in A\}$ and $A^2$ is the linear span of $A^{[2]}$, so that $A^2$ is an ideal in $A$.

Let $A$ be a  commutative algebra without an identity. We denote by $A^\sharp$ the unital  algebra formed by adjoining an identity, $e$, to $A$,
 so that $A$ is a maximal ideal in $A^\sharp$; in the case where $A$ is a Banach algebra, $A^\sharp$ is also a Banach algebra.
 
In the case where $A$ is a topological algebra, a   maximal ideal is either closed or dense in $A$.

First suppose that $M$ is a maximal modular ideal in a commutative algebra $A$. Then $A/M$ is a field containing $\C$. 	In the case where $A$ is a 
{\CBA}, $M$ is necessarily closed, and so $A/M$ is a Banach algebra.  By the Gel'fand--Mazur theorem $A/M \cong \C$, and $M$  is the kernel
 of a continuous character.  In particular, $M$ has codimension $1$.  In fact, the  Gel'fand--Mazur theorem applies to locally 
convex $F$-algebras \cite[Theorem 2.2.42]{D},  and so the previous comment also applies  to this class of topological algebras.

Now suppose that $A$ is a commutative, unital Fr{\'e}chet algebra.   Then a maximal ideal  (which  is necessarily modular)  is not necessarily
 either closed or of codimension $1$, as the following example, which essentially repeats \cite[Proposition 4.10.27]{D},  shows.\s

\begin{example}{\rm Let  $O(\C)$ denote the  set of entire functions on $\C$. This is a commutative, unital algebra   for the pointwise algebraic 
operations, and it is a Fr{\'e}chet algebra with respect to the topology of uniform convergence on compact subsets of $\C$.  It is standard 
that each maximal modular ideal $M$ of codimension $1$ in $O(\C)$ is closed and is such that there exists $z\in \C$ such that
$$
M = M_z:= \{f \in O(\C): f(z)=0\}\,.
$$

Now let $I$ be the set of functions $f\in O(\C)$ such that $f(n)=0$ for each sufficiently large $n\in\N$. Clearly $I$ is an ideal in $O(\C)$, 
and it is easy to see that $I$ is dense in $O(\C)$. Since $O(\C)$ has an identity, $I$ is contained in a maximal modular ideal, say $M$, of  $O(\C)$.
The ideal $M$ is dense in $O(\C)$. Clearly, $M$ is not of the form $M_z$   for any $z\in \C$, and so $M$ does not have codimension
 $1$ in $O(\C)$; the quotient $A/M$ is a `large field'.}\qed
\end{example}\s

\section{}

\noindent  Now suppose that $A$ is  a commutative algebra, and that $M$ is a maximal ideal in $A$. Set $I =A^2+ M$, so that $I$ is 
an ideal in $A$ containing $M$. Thus  either $A^2 \subset M$ or $A^2+M=A$.

Consider the case in which $A^2 \subset M$, so that $A/M$ is a commutative algebra with zero product.  Let $E$ be a subspace of codimension
  $1$ of $A$ with $E\supset M$.   Then $E$ is an ideal in  $A$, and so $E=M$.   Thus it is indeed the case that $M$ has codimension $1$. 
 (This remark is essentially Exercise 6(e) in \cite[Chapter 1]{G}.)
  
  By considering an infinite-dimensional Banach space with zero product, we see that there are examples of maximal  ideals of
 codimension $1$ that are closed and that are dense in a  {\CBA} $A$.

It remains to consider the case where $A^2+M=A$.\s
  
  \section{}
  
\noindent   We now show that the latter case does not occur in a {\CBA} $A$.\s
  
  \begin{proposition}\label{3.1}
  Let $A$ be a {\CBA}. Suppose that $I$ is an ideal in $A$ such that $I$ is dense in $A$  and  $A^2+I=A$.  Then $I$ is not  maximal.
   \end{proposition}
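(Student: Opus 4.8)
The plan is to argue by contradiction. Suppose that $I$ is maximal; then $I$ is proper, and, being dense in $A$, it is not closed. We may of course assume $I\neq A$, since otherwise $I$ is trivially not maximal. I shall reach a contradiction by showing that $I$ is in fact a maximal \emph{modular} ideal of $A$: for then the discussion of \S1 (see also \cite{D}) forces $I$ to be closed, which is absurd.

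First I would pass to the quotient algebra $B:=A/I$. Maximality of $I$ says precisely that $B$ is a \emph{simple} commutative algebra, that is, its only ideals are $\{0\}$ and $B$; and the hypothesis $A^{2}+I=A$ translates into $B^{2}=B$, so in particular $B^{2}\neq\{0\}$.

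The heart of the argument — and the step I expect to require the most care — is to deduce from these two facts that $B$ has an identity. The intended route is as follows. Fix $b\in B$ with $b\neq 0$. Then $bB$ is an ideal of $B$, and it is nonzero: otherwise the annihilator $\{c\in B: cB=\{0\}\}$ would be a nonzero ideal of $B$, hence equal to $B$, contradicting $B^{2}\neq\{0\}$. By simplicity $bB=B$, so $bu=b$ for some $u\in B$; and then, for an arbitrary $c\in B$, writing $c=bd$ and using commutativity gives $cu=(bu)d=bd=c$. Thus $u$ is an identity for $B$, and a commutative, unital, simple algebra is a field, since every nonzero element generates $B$ as an ideal and is therefore invertible.

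To finish, I would lift: choose $u\in A$ mapping to the identity of $B=A/I$. Then $au-a\in I$ for every $a\in A$, so $I$ is a modular ideal with modular identity $u$. Being also maximal, $I$ is a maximal modular ideal of the \CBA{} $A$, and hence is closed by the remarks in \S1 — contradicting the fact that the proper dense ideal $I$ is not closed. Therefore $I$ is not maximal. Note that, once modularity of $I$ is established, nothing beyond what has already been recorded in \S1 is needed; the only genuinely new work is the purely algebraic passage from ``$B$ simple and $B^{2}=B$'' to ``$B$ is a field''.
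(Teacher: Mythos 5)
Your argument is correct, but it takes a genuinely different route from the paper's. You pass to $B=A/I$, note that maximality makes $B$ simple while $A^2+I=A$ gives $B^2=B\neq\{0\}$, and then run a purely algebraic argument (the annihilator trick plus $bB=B$) to produce an identity in $B$; lifting it shows $I$ is modular, and the contradiction comes from the standard fact, recorded in \S1, that a maximal modular ideal in a commutative Banach algebra is closed, which is incompatible with $I$ being proper and dense. The paper instead gives a self-contained, hands-on proof: it picks $a_0\in (A^2+I)\setminus I$ with a representation $b_1c_1+\cdots+b_nc_n+x$ of minimal length, forms $J=A^\sharp a_0+I\supsetneq I$, and, assuming $J=A$, uses density of $I$ to replace $c_1$ modulo $I$ by an element $d_2$ of small norm so that $e-d_1d_2$ is invertible via a Neumann series, thereby shortening the representation — a contradiction showing $J\subsetneq A$. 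Your approach is shorter and arguably more conceptual: it isolates the point that $A^2+I=A$ forces a maximal $I$ to be modular, so everything reduces to known theory (and it in fact yields the stronger conclusion $A/I\cong\C$ directly, anticipating Theorem \ref{3.3}); the price is that the analytic content — invertibility of elements near the identity — is outsourced to the quoted closedness of maximal modular ideals, whose standard proof uses essentially the same Neumann-series idea that the paper deploys explicitly. The paper's construction has the merits of exhibiting a concrete intermediate ideal $J$ and of transferring almost verbatim to the algebraic setting of Proposition \ref{3.1a}, where density is replaced by radicality.
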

  
  \begin{proof} Necessarily $A$ does not have an identity. The norm in $A^\sharp$  is  denoted by  $\norm$.

For each $n\in\N$, we denote by  $A_n$ the set of elements $a\in A$ such that there exist $b_1,\dots,b_n,c_1, \dots,c_n\in A$ and $x \in I$ with
\begin{equation}\label{(3.1)}
a = b_1c_1+\cdots+ b_nc_n + x\,.
\end{equation}

Choose an element $a_0\in A \setminus I$ such that  $a_0$ has a representation of the form (\ref{(3.1)}) and such that $n$ is the minimum
 natural number with this property.  Define
$$
J = A^\sharp a_0 +I\,.
$$
Then $J$ is an ideal in $A$ with $a_0\in J$, and $J \supsetneq I$.  We shall show that $J\neq A$, and hence that $I$ is not a maximal ideal in $A$.

Assume towards a contradiction that $J=A$. Then $b_1\in J$, and so there exists $d_1 \in  A^\sharp$ such that 
$$
b_1- d_1a_0 \in I\,,
$$
say $\lV d_1 \rV = m$.  Choose $d_2 \in A$ such that $c_1 -d_2 \in I$ and $\lV d_2\rV < 1/m$; this is possible because $I$ is dense in $A$.

Now we have 
$$
b_1 -d_1(b_1d_2+b_2c_2+\cdots+ b_nc_n) \in I\,,
$$
  and so 
$$
b_1(e - d_1d_2) \in d_1(b_2c_2+\cdots+ b_nc_n) + I\,.
$$ 
 However $\lV d_1d_2\rV \leq \lV d_1\rV\,\lV d_2\rV < 1$, and so the element $e - d_1d_2$ is invertible in $A^\sharp$, say with inverse $d_3$. Thus
\begin{eqnarray*}
 a_0 &\in &  c_1d_1d_3(b_2c_2+\cdots+ b_nc_n) + (b_2c_2+\cdots+ b_nc_n) + I\\
 &=& (e+   c_1d_1d_3 )(b_2c_2+\cdots+ b_nc_n) +I\,.
\end{eqnarray*}
In the case where $n>1$, this shows that $a_0\in A_{n-1}$, a contradiction of the minimality of the choice of $n$. In the case where $n=1$,
  the argument shows that $b_1(e - d_1d_2) \in I$ and hence that $b_1 \in I$, and then that $a_0 =b_1c_1 \in I$, a contradiction of the fact that 
  $a_0\notin I$.  Thus $J\subsetneq A$.
 \end{proof}\s
 
 There is a closely-related algebraic result that is surely known and in some text.\s
 
   \begin{proposition}\label{3.1a}
  Let $R$ be a commutative, radical algebra. Suppose that $I$ is an ideal in $R$ such that $R^2+I=A$.  Then $I$ is not  maximal.
   \end{proposition}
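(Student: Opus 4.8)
The plan is to repeat the proof of Proposition~\ref{3.1} almost word for word, the only real change being that the step where the density of $I$ and a Neumann series were invoked is now supplied, more cheaply, by the radical hypothesis (and, of course, $R^2+I=R$ is intended in the statement). As a preliminary I would note that a non-zero radical algebra has no identity, since an identity would have to be quasi-regular; and if $R=\{0\}$ the assertion is vacuous. So $R^\sharp$ is a proper unitisation of $R$, and, as $R=\rad R$, every element of $R$ is quasi-regular, so that in fact $e-ra$ is invertible in $R^\sharp$ whenever $r\in R^\sharp$ and $a\in R$, because then $ra\in R$.

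Assume towards a contradiction that $I$ is maximal, so $I\subsetneq R$. For $n\in\N$, let $R_n$ denote the set of $a\in R$ admitting a representation $a=b_1c_1+\cdots+b_nc_n+x$ with $b_1,\dots,b_n,c_1,\dots,c_n\in R$ and $x\in I$. Pick $a_0\in R\setminus I$; since $a_0\in R=R^2+I$ but $a_0\notin I$, it has such a representation with some $n\geq1$, and I would choose $a_0$ together with the representation so that $n$ is as small as possible. Put $J=R^\sharp a_0+I$. Exactly as in Proposition~\ref{3.1}, $J$ is an ideal in $R$ with $a_0=ea_0\in J$, hence $I\subsetneq J$, and so maximality of $I$ forces $J=R$.

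Since $b_1\in R=J$, there is $d_1\in R^\sharp$ with $b_1-d_1a_0\in I$. Substituting the representation of $a_0$ and discarding the term $d_1x\in I$ yields
\[
b_1(e-d_1c_1)\in d_1(b_2c_2+\cdots+b_nc_n)+I\,.
\]
Because $d_1c_1\in R$, the element $e-d_1c_1$ is invertible in $R^\sharp$, say with inverse $d_3$; hence $b_1\in d_1d_3(b_2c_2+\cdots+b_nc_n)+I$. Multiplying this by $c_1$ and then adding $b_2c_2+\cdots+b_nc_n+x$ gives
\[
a_0\in(e+c_1d_1d_3)(b_2c_2+\cdots+b_nc_n)+I\,.
\]
If $n>1$, set $b_j'=(e+c_1d_1d_3)b_j$ for $2\leq j\leq n$; then $b_j'\in R$ and $a_0\in b_2'c_2+\cdots+b_n'c_n+I$, so $a_0\in R_{n-1}$, contradicting the minimality of $n$. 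If $n=1$, the sum $b_2c_2+\cdots+b_nc_n$ is empty, so $b_1\in I$ and hence $a_0=b_1c_1+x\in I$, contradicting $a_0\notin I$. In either case we reach a contradiction, so $I$ is not maximal.

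There is no genuine obstacle here; in fact this argument is slightly shorter than that of Proposition~\ref{3.1}. The one step that carries the content is the invertibility of $e-d_1c_1$: in the Banach-algebra setting one had first to approximate $c_1$ by a norm-small element $d_2$ with $c_1-d_2\in I$, so that $e-d_1d_2$ could be inverted by a Neumann series, whereas here the hypothesis $R=\rad R$ makes $e-d_1c_1$ invertible at once, since $d_1c_1\in\rad R$. The remaining points — that $J$ is an ideal, and that the new coefficients $b_j'$ stay in $R$ (because $c_1d_1d_3b_j\in R$) — are the routine ones, handled exactly as in Proposition~\ref{3.1}.
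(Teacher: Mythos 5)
Your proof is correct and is essentially the paper's own argument: the paper proves Proposition~\ref{3.1a} precisely by rerunning the proof of Proposition~\ref{3.1} with $R$ in place of $A$, using the radical hypothesis to invert $e-d_1c_1$ (i.e.\ taking $d_2=c_1$) instead of the density-plus-Neumann-series step. Your write-up just spells this out in full (and correctly reads the statement's $R^2+I=A$ as $R^2+I=R$), so there is nothing to add.
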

   
     \begin{proof}  The element $c_1d_1$  in the above proof (with $R$ for $A$) is such that $e -c_1d_1$ is invertible in  $R^\sharp$ 
because $R$ is radical, and so the argument of the above proof gives the result.
     \end{proof}\s

\begin{theorem} \label{3.3} Let $A$ be a {\CBA}. Then every maximal ideal $M$  in $A$ has codimension $1$ in $A$.  Further, either
 $A/M\cong \C$ or $A^2\subset M$.
\end{theorem}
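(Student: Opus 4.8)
The plan is to split on the dichotomy already isolated in \S2. Given a maximal ideal $M$ in the {\CBA} $A$, put $I = A^2 + M$; this is an ideal of $A$ containing $M$, so by maximality either $A^2 \subseteq M$ or $A^2 + M = A$. In the first case the argument recorded in \S2 shows directly that $M$ has codimension $1$ (and $A^2 \subseteq M$ is the second alternative in the statement). So everything comes down to the case $A^2 + M = A$, where the aim is to prove $A/M \cong \C$.

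So suppose $A^2 + M = A$. Recall that a maximal ideal in a topological algebra is either closed or dense. If $M$ were dense, it would be a dense ideal with $A^2 + M = A$, and Proposition~\ref{3.1} would give that $M$ is not maximal, a contradiction. Hence $M$ is closed, so $B := A/M$ is a nonzero commutative Banach algebra; since $M$ is maximal as an ideal, $B$ is simple (its only ideals are $\{0\}$ and $B$), and since $A^2 + M = A$ we have $B^2 = B$.

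The one remaining step --- and the only point where a genuine (though short) argument is needed --- is to deduce $B \cong \C$ from the fact that $B$ is a simple commutative Banach algebra with $B^2 = B \neq \{0\}$. I would first show $B$ has an identity: since $B^2 \neq \{0\}$ there is $a \in B$ with $aB \neq \{0\}$; as $aB$ is an ideal of the simple algebra $B$, we get $aB = B$, so $au = a$ for some $u \in B$; now for arbitrary $r \in B$ write $r = as$ and compute, using commutativity and associativity, $ru = a(su) = a(us) = (au)s = as = r$, so $u$ is an identity for $B$. A simple commutative \emph{unital} Banach algebra is a field (each nonzero $b$ has $bB = B$, hence is invertible), so the Gel'fand--Mazur theorem yields $B \cong \C$; in particular $M$ has codimension $1$. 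Combining the two cases gives the theorem together with its stated alternative.

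I do not expect a serious obstacle here: the substantive content is Proposition~\ref{3.1}, which is already in hand, and the mild subtlety is only that $M$ is not assumed to be modular, so one cannot quote the Gel'fand--Mazur argument of \S1 verbatim but must first manufacture an identity in $A/M$ from the hypothesis $A^2 + M = A$, exactly as above.
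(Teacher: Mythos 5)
Your proposal is correct and follows essentially the same route as the paper: the dichotomy $A^2\subset M$ versus $A^2+M=A$, Proposition~\ref{3.1} to rule out a dense $M$ in the second case, and then Gel'fand--Mazur applied to the closed quotient $A/M$. The only difference is that you spell out the step the paper leaves implicit --- manufacturing an identity in the simple quotient $B=A/M$ from $B^2=B$ so that Gel'fand--Mazur applies without assuming $M$ modular --- and that detail is handled correctly.
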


\begin{proof}   Let $M$ be a maximal ideal  in $A$. We have noted that  $M$ is either closed or dense in $A$.
 We have also noted that either $A^2 \subset M$ or $A^2+M=A$, and that in the former case $M$ does have codimension $1$
 in $A$.
 
 Thus we may suppose that $A^2+M=A$. By Proposition \ref{3.1}, it cannot be that $M$ is dense in $A$, and so $M$ is closed.  Thus  
 $A/M $ has dimension $1$, and so $M$ has codimension $1$ in $A$.  Further, $A/M\cong \C$.
\end{proof}\s

\begin{corollary} \label{3.4}
Let $R$ be a commutative, radical Banach algebra such that $R^2=R$.  Then $R$ has no maximal ideals.
\end{corollary}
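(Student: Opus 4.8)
The plan is to derive a contradiction from the \emph{existence} of a maximal ideal, using the results already established in the previous section; nothing new needs to be proved.

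Suppose, towards a contradiction, that $M$ is a maximal ideal in $R$. Consider the ideal $R^2 + M$. It contains $M$, so by maximality either $R^2 + M = M$, that is, $R^2 \subset M$, or else $R^2 + M = R$. Since we are assuming $R^2 = R$, the first alternative would give $R = R^2 \subset M$, contradicting $M \neq R$; hence necessarily $R^2 + M = R$.

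Now Proposition~\ref{3.1a} applies directly, with the commutative, radical algebra $R$ in place of its $R$ and the ideal $I = M$: the hypothesis $R^2 + M = R$ forces $M$ not to be maximal, which is the desired contradiction. Therefore $R$ has no maximal ideals. (Equivalently, one could quote Theorem~\ref{3.3}: a maximal ideal $M$ of the commutative Banach algebra $R$ satisfies $R/M \cong \C$ or $R^2 \subset M$; the former would yield a nonzero character on the radical algebra $R$, which is impossible, while the latter again gives $M = R$ once $R^2 = R$ is used.)

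Since all the substance is contained in Proposition~\ref{3.1a} (or Theorem~\ref{3.3}), there is essentially no obstacle; the only point worth keeping in mind is the standard fact that a radical Banach algebra carries no nonzero characters, so the case $R/M \cong \C$ cannot occur, which is precisely why the dichotomy above collapses to a contradiction.
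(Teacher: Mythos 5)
Your argument is correct. The parenthetical alternative you give is precisely the paper's own proof: the paper simply tests the two alternatives of Theorem~\ref{3.3} against the hypotheses, noting that $R/M\cong\C$ is impossible because a radical algebra admits no characters, and $R^2\subset M$ is impossible because $R^2=R$. Your primary route is genuinely different in emphasis: after deducing $R^2+M=R$ from maximality together with $R^2=R$, you invoke the purely algebraic Proposition~\ref{3.1a} (note its statement contains a typo, ``$R^2+I=A$'' should read ``$R^2+I=R$'', and you are applying it in the intended form, which is what its proof establishes) rather than Theorem~\ref{3.3}. What this buys is that none of the Banach-algebra machinery behind Theorem~\ref{3.3} is needed --- neither the closed-or-dense dichotomy, nor the density hypothesis in Proposition~\ref{3.1}, nor Gel'fand--Mazur --- so your argument in fact proves the stronger statement that \emph{any} commutative, radical algebra $R$ (no norm required) with $R^2=R$ has no maximal ideals. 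The paper's route, by contrast, is a one-line consequence of its main theorem and keeps the corollary tied to the Banach setting in which it is stated. Both derivations rest only on results already established in the paper, so there is no gap.
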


\begin{proof}  Assume that $M$ is a maximal ideal in $R$. Then it is  not the case that $R/M\cong \C$ because $R$ is radical, and it is  not the case that
$R^2\subset M$ because $R^2=R$.  In either case, we have a contradiction of Theorem \ref{3.3}.  Thus $R$ has no maximal ideals.
\end{proof}\s

There are many examples of  commutative, radical Banach algebras such that $R^2=R$. Each commutative, radical Banach algebra with a
 bounded approximate identity has this property.  For example, this is the case for the Volterra algebra $\mathcal V$,
which is the space $L^1[0,1]$  with convolution product $\star$ given by
$$
(f\,\star\,g) (t) = \int_0^1 f(t-s)g(s){\dd}s\quad (t \in [0,1])
$$
for $f,g \in \mathcal V$.  There are also examples which are integral domains.

The following example shows that there are commutative, radical Banach algebras $R$ such that $\overline{R^2}=R$,
 but such that $R$ does have a maximal ideal, necessarily of codimension $1$.\s
 
 \begin{example}{\rm  Let $\I =[0,1]$, and define 
$$
R = \{f \in C(\I) : f(0)=0\}\,,
$$
taken with the uniform norm $\lv \,\cdot\,\rv_\I $ and the above truncated convolution product.  Then $R$ is a commutative, radical Banach algebra.

Set $I = \{f \in R: \lim_{t\to 0+}f(t)/t =0\}$, so that $I$  is a linear subspace of $R$; in fact, $I$ is an ideal in $(R,\,\star\,)$.

Take $f,g \in R$ and $\varepsilon > 0$.  Then there exists $\delta>0$ such that $$\lv g(s)\rv_\I < \varepsilon\quad(0\leq s\leq \delta)\,.$$
Thus, for $0\leq t\leq \delta$, we have
$$
\lv (f\,\star\,g) (t)\rv \leq \lv f\rv_\I\int_0^t \lv g(s\rv{\dd}s < \varepsilon t \lv f\rv_\I\,,
$$
and so $f\,\star\,g \in I$. Hence $R^2\subset I \subset M$, and so $M$ is a  maximal ideal (of codimension $1$) in $R$.
}\qed
\end{example}\s
 
 \m

\noindent Department of  Mathematics and Statistics\\
  University of Lancaster\\
Lancaster LA1 4YF\\
United Kingdom \\
 {g.dales@lancaster.ac.uk}

\begin{thebibliography}{9}
 \bibitem{D} {H.\ G.\ Dales}, \emph{Banach algebras and automatic continuity}, {London  Mathematical Society Monographs}, 
Volume 24,  Clarendon Press, Oxford, 2000.\vspace{2mm}

 \bibitem{G}  {T.\ W.\ Gamelin},   \emph{Uniform algebras}, Prentice-Hall, Englewood Cliffs, New Jersey,  1969.
\end{thebibliography}
\end{document}